\newtheorem{theorem}{Theorem}
\newtheorem{lemma}[theorem]{Lemma}
\newenvironment{proof}[1][Proof]{\noindent\textbf{#1.} }{\ \rule{0.5em}{0.5em}}
\begin{document}

\begin{center}
{\Large Dual Nature of Orbits of the Divide-or-Choose 2 Rule: }

{\Large A Quadratic Collatz-type Recursion}

\medskip

H. SEDAGHAT \footnote{Email: hsedagha@vcu.edu}

\end{center}

\begin{abstract}
We obtain a complete characterization of all orbits of a quadratic
Collatz-type recursion called the divide-or-choose-2 rule. Each orbit either
ends in a cycle whose period depends on the initial value or it goes to
infinity. We specify which initial values generate periodic orbits, and also
show that all other initial values generate orbits that go to infinity.

\end{abstract}

\bigskip

Let $\mathbb{N}$\ be the set of all positive integers and $\mathbb{N}%
_{0}=\{0,1,2,3,\ldots\}$ the set of all non-negative integers. In the paper
\cite{HS2} it was shown that all orbits of the map $Q:\mathbb{N}%
_{0}\rightarrow\mathbb{N}_{0}$%
\begin{equation}
Q(n)=\left\{
\begin{array}
[c]{l}%
n/2\quad\text{if }n\text{ is even, i.e. }n\equiv0(\text{mod2)}\\
\binom{n}{2}\text{\quad if }n\text{ is odd, i.e. }n\equiv1(\text{mod2)}%
\end{array}
\right.  \label{dc2}%
\end{equation}
where
\[
\binom{n}{2}=\frac{n(n-1)}{2}%
\]

\noindent is the binomial coefficient, must either end in a cycle or go to
infinity. It was also shown that cycles of all possible lengths were possible;
for instance, if $n=33$ then%
\begin{gather*}
Q(33)=\frac{33(32)}{2}=33(16)=528,\ Q^{2}(33)=Q(Q(33))=\frac{Q(33)}%
{2}=33(8)=264\\
Q^{3}(33)=\frac{33(8)}{2}=132,\ Q^{4}(33)=\frac{33(4)}{2}=66,\ Q^{5}%
(33)=\frac{33(2)}{2}=33
\end{gather*}

Therefore, after 5 iterations of $Q$ the initial value 33 is returned and have
generated a cycle of length 5 which we may write succinctly as%
\[
33\rightarrow528\rightarrow264\rightarrow132\rightarrow66\rightarrow33
\]

We call $Q$ the \textit{divide-or-choose-2 rule}. It is a multiplicative
version of the map%
\[
F(n)=\left\{
\begin{array}
[c]{l}%
n/2\qquad\text{if }n\equiv0(\text{mod2)}\\
\frac{3n-1}{2}\text{\quad\ if }n\equiv1(\text{mod2)}%
\end{array}
\right.
\]
in the following sense:%
\[
\binom{n}{2}=n\left(  \frac{n-1}{2}\right)  ,\qquad\frac{3n-1}{2}=n+\frac
{n-1}{2}%
\]

Equivalently, $F$ is the additive or \textquotedblleft linear" version of $Q$.
We call $F$ and $Q$ \textquotedblleft Collatz-type" functions because $Q$ is
the multiplicative version of $F$ and
\[
F(n)=-T(-n),\quad n\in\mathbb{N}%
\]
where $T$ is the well-known \textit{Collatz, or 3n+1 function} (compressed
form)%
\[
T(n)=\left\{
\begin{array}
[c]{l}%
n/2\qquad\text{if }n\equiv0(\text{mod2)}\\
\frac{3n+1}{2}\text{\quad\ if }n\equiv1(\text{mod2)}%
\end{array}
\right.
\]

Thus the orbits of $F$ in $\mathbb{N}$ are the negatives of the orbits of $T$
in $-\mathbb{N}$. While all orbits of $T$ are conjectured to reach the cycle
$\{1,2,1,2,\ldots\}$ from any initial value $n\in\mathbb{N}$ the orbits of $T$
in $-\mathbb{N}$ include two known nontrivial cycles. A substantial amount of
research has been done on the 3n+1 map and its variants like $F$; see e.g.
\cite{LG}. The existing research shows that $T$ is unlikely to have any cycles
in $\mathbb{N}$ other than the base cycle above and further, $T$ is
unlikely to have a divergent orbit that goes to infinity. However, neither of
these facts have been proved.

\medskip

$Q$ is expected to have divergent orbits since its odd part essentially
squares a number while its even part merely divides it by 2. But as shown in
\cite{HS2} proving the existence of a divergent orbit is not straightforward
$Q$ or other higher degree Collatz-type maps. The goal of this paper is to
prove that the orbits of $Q$ that go to infinity not only exist but they also
constitute almost all of the orbits in the sense of natural density because
the bounded orbits of $Q$ occur only from a limited range of values. This
result answers the open questions in \cite{HS2} and completes our study of the
map $Q$.

\medskip

We begin with the observation that $Q$ has no nontrivial orbits in
$-\mathbb{N}$ since its odd half is always non-negative. So we consider only
the orbits of $Q$ in $\mathbb{N}$. 

For each $x_{0}\in\mathbb{N}_{0}$ recall that the numbers%
\[
x_{0},Q(x_{0}),Q(Q(x_{0})),\ldots
\]
constitute an \textit{orbit} or \textit{trajectory} in $\mathbb{N}_{0}$ of the
recursion%
\begin{equation}
x_{n+1}=Q(x_{n}) \label{q1}%
\end{equation}

If $x_{m}=x_{0}$ for some $m\in\mathbb{N}$ then the numbers $x_{0}%
,x_{1},\ldots,x_{m-1}$ repeat so we have a \textit{periodic orbit with period}
$m$ or equivalently, an $m$-\textit{cycle}, i.e. a cycle of length $m$. A
number $x_{0}$ that lies on a cycle is called a \textit{periodic point} of
$Q$. If $m=1$ then the cycle is often called a \textit{fixed point} of $Q$. It
is straightforward to show that $Q$ has two fixed points in $\mathbb{N}_{0}$,
namely, 0 and 3.

\medskip

An orbit $x_{0},x_{1},\ldots,x_{n},\ldots$ \textit{goes to infinity} (or
\textit{escapes to infinity} or \textit{diverges}) if for every positive
integer $M$ the value of $x_{n}$ exceeds $M$ for all large enough indices $n$.
Orbits that go to infinity are unbounded and have no bounded subsequences.
While not monotone in general, arbitrarily long monotone chains of odd or even
numbers may well exist in some orbits; see \cite{HS2}.

\medskip

If $x_{0}=2^{m}k$ is an arbitrary even number where $m\in\mathbb{N}_{0}$ and
$k$ is odd then%
\begin{equation}
x_{1}=2^{m-1}k,\ x_{2}=2^{m-2}k,\ \ldots,\ x_{m}=k\label{evo}%
\end{equation}

Also every odd number larger than 1 can be written as $2^{m}k+1$ where
$m\in\mathbb{N}_{0}$ and $k$ is odd. Note that
\begin{equation}
Q(2^{m}k+1)=(2^{m}k+1)\left(  \frac{2^{m}k+1-1}{2}\right)  =2^{m-1}k(2^{m}k+1)
\label{odo}%
\end{equation}
so if $x_{0}=2^{m}k+1$ then%
\[
x_{1}=2^{m-1}kx_{0}%
\]

If $m>1$ then $x_{2}$ is just half of $x_{1}$ so that $x_{2}=2^{m-2}kx_{0}$.
By induction%
\begin{equation}
x_{m}=kx_{0} \label{odom}%
\end{equation}

In the special case $k=1$ we obtain an $m$-cycle.

\begin{lemma}
\label{L1}For every positive integer $m$ the recursion (\ref{q1}) has an
$m$-cycle given by the numbers (in the order shown):%
\begin{equation}
2^{m}+1\rightarrow2^{m-1}(2^{m}+1)\rightarrow2^{m-2}(2^{m}+1)\rightarrow
\ldots\rightarrow2(2^{m}+1)\rightarrow2^{m}+1 \label{cyc}%
\end{equation}

\end{lemma}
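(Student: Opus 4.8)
The plan is to read the conclusion straight off the general computations already recorded in (\ref{odo}) and (\ref{odom}), specialized to $k=1$, and then to check that the list of $m$ numbers in (\ref{cyc}) really does have period exactly $m$. First I would put $x_{0}=2^{m}+1$, which for every $m\in\mathbb{N}$ is an odd number of the form $2^{m}k+1$ with $k=1$. Applying the odd branch of $Q$ via (\ref{odo}) gives $x_{1}=Q(x_{0})=2^{m-1}(2^{m}+1)$, which is the second entry of (\ref{cyc}).

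Next I would establish by a short finite induction on $j$ that $x_{j}=2^{m-j}(2^{m}+1)$ for all $0\le j\le m$. The cases $j=0$ and $j=1$ are already in hand. For the inductive step with $1\le j\le m-1$, the number $x_{j}=2^{m-j}(2^{m}+1)$ has exponent $m-j\ge 1$, hence is even, so the even branch of $Q$ applies and $x_{j+1}=x_{j}/2=2^{m-j-1}(2^{m}+1)$, as claimed; this is exactly the halving behaviour of (\ref{evo}). Taking $j=m$ yields $x_{m}=2^{0}(2^{m}+1)=2^{m}+1=x_{0}$, so the orbit returns to its starting value after $m$ steps, and listing $x_{0},x_{1},\ldots,x_{m-1}$ in order reproduces the chain (\ref{cyc}) verbatim.

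Finally I would confirm that the period is precisely $m$ rather than a proper divisor, i.e. that the $m$ entries $x_{0},\ldots,x_{m-1}$ are pairwise distinct. This is immediate: $x_{0}=2^{m}+1$ is odd whereas $x_{j}=2^{m-j}(2^{m}+1)$ is even for $1\le j\le m-1$, and among the latter the factors $2^{m-j}$ are distinct powers of $2$ while $2^{m}+1$ is a fixed positive odd factor, so no two of these numbers coincide. Hence (\ref{q1}) has a genuine $m$-cycle, completing the argument. There is no substantive obstacle here; the only points that require a word of care are the degenerate base case $m=1$, where the chain collapses to the fixed point $3\to 3$ already identified in the text (the odd step $Q(3)=\binom{3}{2}=3$ lands on an odd value and the halving phase is empty), and the distinctness check, which is what actually pins the period down to $m$.
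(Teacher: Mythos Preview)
Your argument is correct and mirrors the paper's own justification, which is nothing more than the derivation of (\ref{odo}) and (\ref{odom}) in the text immediately preceding the lemma, specialized to $k=1$; your added distinctness check to pin down the exact period is a welcome refinement the paper leaves implicit. One small slip to fix: the formula $x_{j}=2^{m-j}(2^{m}+1)$ is false at $j=0$ (it would give $2^{m}(2^{m}+1)$, not $2^{m}+1$), so the induction should be stated for $1\le j\le m$ with base case $j=1$---this is harmless, since your inductive step and your conclusion $x_{m}=x_{0}$ use only $j\ge 1$ anyway.
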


\medskip

We can infer from (\ref{evo}) that each number of type $2^{l}(2^{m}+1)$
reaches an $m$-cycle of $Q$ in $l$ steps for every $l\in\mathbb{N}$. Are these
the only numbers that reach cycles?

It is reasonable to think that there may be odd numbers other than $2^{m}+1$
that eventually reach cycles. However, we soon see that this is not the case.
The next lemma is the main step in proving this fact.

\begin{lemma}
\label{L2}The following equation has no solution $(\,j,k,m)\in\mathbb{N}^{3}$
where $k\geq3$ is odd:
\begin{equation}
k(2^{j}k+1)=2^{m}+1\label{e1}%
\end{equation}

\end{lemma}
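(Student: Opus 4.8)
The plan is to rewrite (\ref{e1}) as $2^{m}=2^{j}k^{2}+(k-1)$ — equivalently, to show that $2^{j}k^{2}+k-1$ is never a power of $2$ when $k\ge 3$ is odd — and to wring information out of it by comparing the exponent of $2$ on each side. Let $v$ be the exponent of $2$ in $k-1$; since $k$ is odd, $v\ge 1$, while $2^{j}k^{2}$ contributes exponent exactly $j$ because $k^{2}$ is odd. If $j\ne v$, the right side has exponent $\min(j,v)$, forcing $m=\min(j,v)$, whereas the right side is at least $2^{j}k^{2}\ge 9\cdot 2^{j}>2^{\min(j,v)}$. Hence $j$ is exactly the exponent of $2$ in $k-1$, so $k-1=2^{j}d$ with $d$ odd; a one-line check discards $d=1$ (then $k=2^{j}+1$ and $2^{m-j}=k^{2}+1=2^{2j}+2^{j+1}+2$ has exponent $1$, forcing the impossible $2=2^{2j}+2^{j+1}+2$), so $d\ge 3$. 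Dividing by $2^{j}$ puts the equation in the shape $2^{m-j}=k^{2}+d=(2^{j}d+1)^{2}+d$.

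For $j=1$ this reads $2^{m}=2k^{2}+k-1=(2k-1)(k+1)$; since $k\ge 3$, the factor $2k-1$ is an odd integer larger than $1$, so it cannot divide $2^{m}$, and this case is finished.

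For $j\ge 2$ I would peel off powers of $2$ iteratively. From $k^{2}-1=2^{j+1}d(2^{j-1}d+1)$, with $2^{j-1}d+1$ odd, the identity $2^{m-j}-(d+1)=k^{2}-1$ shows the right side has $2$-adic exponent exactly $j+1$; since $2^{m-j}=k^{2}+d>2^{2j}$ forces $m-j>2j\ge j+2$, the number $2^{m-j}$ is divisible by $2^{j+2}$, so $d+1$ must also have exponent exactly $j+1$. Writing $d+1=2^{j+1}e$ ($e$ odd) and dividing by $2^{j+1}$ gives $2^{m-2j-1}=2^{j-1}d^{2}+(d+e)$; a second comparison of exponents forces $d+e$ to have exponent exactly $j-1$, and writing $d+e=2^{j-1}g$ ($g$ odd) and dividing by $2^{j-1}$ yields $2^{m-3j}=d^{2}+g$ with $g=\big((2^{j+1}+1)d+1\big)/2^{2j}$ odd, $1\le g<d$, and $d\ge 7$. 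To finish off this residual equation the cleanest device is to observe that multiplying the original (\ref{e1}) by $2^{j+2}$ and completing the square gives the generalized Ramanujan--Nagell identity $(2^{j+1}k+1)^{2}=2^{m+j+2}+2^{j+2}+1$ (indeed $2^{j+2}k(2^{j}k+1)=(2^{j+1}k+1)^{2}-1$). For $j=1$ the constant $2^{j+2}+1=9$ is a square and one recovers the elementary argument above; for $j\ge 2$ it is not a square, and one invokes that for each fixed $j$ the equation $x^{2}=2^{N}+(2^{j+2}+1)$ has only finitely many solutions, whose short list contains none satisfying $x\equiv 1\pmod{2^{j+1}}$ with $(x-1)/2^{j+1}$ an odd integer at least $3$.

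The crux — and the only part that is not routine bookkeeping — is dispatching $j\ge 2$. One may either import the standard but non-elementary finiteness theorem for $x^{2}=2^{N}+c$, or carry the $2$-adic descent above all the way down: the passage $d\mapsto g$ strictly decreases a positive integer, so the iteration must terminate, and the task becomes to show that the terminal instance is incompatible with the side conditions (such as ``$d+1$ has exponent exactly $j+1$'') that every step of the descent is forced to preserve. The reduction to $2^{m-3j}=d^{2}+g$, equivalently to the displayed Ramanujan--Nagell identity, is straightforward; the delicate endgame is closing out $j\ge 2$.
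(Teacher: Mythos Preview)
Your opening reduction and the handling of $j=1$ via the factorization $2k^{2}+k-1=(2k-1)(k+1)$ are correct and clean. The gap is entirely in the case $j\ge 2$, and you say so yourself: neither of your two proposed finishes actually closes.

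The Ramanujan--Nagell route does not work as stated. The constant $c=2^{j+2}+1$ varies with $j$, so ``finitely many solutions for each fixed $c$'' gives no uniform conclusion over all $j\ge 2$, and you never exhibit or check any of the finite lists. Worse, the reformulation is circular: from $(2^{j+1}k+1)^{2}=2^{m+j+2}+2^{j+2}+1$ one has $(x-1)(x+1)=2^{j+2}(2^{m}+1)$ with $x=2^{j+1}k+1$, and writing $x-1=2^{j+1}k$, $x+1=2(2^{j}k+1)$ simply recovers $k(2^{j}k+1)=2^{m}+1$. Nothing has been gained.

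The descent $d\mapsto g$ is more promising, and your single step is computed correctly, but you have not shown that the process iterates. You start from $2^{m-j}=k^{2}+d$ together with the structural relation $k-1=2^{j}d$; after one round you arrive at $2^{m-3j}=d^{2}+g$ together with the \emph{different} relation $(2^{j+1}+1)d+1=2^{2j}g$. These are not of the same shape, so the argument that produced $g$ from $d$ does not apply verbatim to produce a next term from $g$. Saying ``$g<d$, so the iteration terminates'' presupposes an iteration that has not been set up; the ``side conditions'' you allude to change at each step and you have not identified an invariant that carries the contradiction.

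The paper's proof supplies exactly the missing inductive template. It shows that at every stage the equation can be cast in the general form
\[
2^{A}k_{i}^{2}+k_{i}\sum_{l}\pm 2^{B_{l}}+\sum_{p}\pm 2^{C_{p}}+k_{i}\pm 1=2^{M_{i}},
\]
with all displayed exponents positive, and that substituting $k_{i}\pm 1=2^{j_{i+1}}k_{i+1}$ and dividing by the minimal power $2^{j_{i+1}}$ returns an equation of the \emph{same} general form with $M_{i+1}=M_{i}-j_{i+1}<M_{i}$. The descent is on the right-hand exponent $M_{i}$, which must stay positive yet strictly decreases --- a contradiction. Your explicit computations are the first two instances of this scheme; what is missing is the stable inductive form, not more hand-computed steps.
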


\begin{proof}
By way of contradiction suppose that (\ref{e1}) is true for some
$j,m\in\mathbb{N}$ and some odd $k\geq3$. Rearrange (\ref{e1}) into the
equivalent form%
\begin{equation}
2^{j}k^{2}+k-1=2^{m} \label{e2}%
\end{equation}

Since $k-1$ is even, there are $j_{1},k_{1}\in\mathbb{N}$ with $k_{1}$ odd
such that%
\begin{equation}
k-1=2^{j_{1}}k_{1} \label{e2a}%
\end{equation}

Inserting this in (\ref{e2}) gives%
\begin{equation}
2^{j}k^{2}+2^{j_{1}}k_{1}=2^{m} \label{e3}%
\end{equation}

Clearly $m$ must exceed both $j$ and $j_{1}$. There are three possibilities:

If $j>j_{1}$ then we may divide (\ref{e3}) by $2^{j_{1}}$ to obtain%
\[
2^{j-j_{1}}k^{2}+k_{1}=2^{m-j_{1}}%
\]

But this equality is imposible since its left side is odd while its right side
is even.

Similarly, if $j>j_{1}$ then division by $2^{j}$ yields%
\[
k^{2}+2^{j_{1}-j}k_{1}=2^{m-j}%
\]

This equality is also impossible due to odd-even disparity on opposite sides.

Therefore, (\ref{e3}) may hold only if $j_{1}=j$, in which case after dividing
by $2^{j}$ we obtain the equivalent reduced form:%
\begin{equation}
k^{2}+k_{1}=2^{m-j} \label{e4}%
\end{equation}

This equality may hold for some $m>j$ if $k$ is large enough, since by
(\ref{e2a})%
\[
k=2^{j_{1}}k_{1}+1\geq2^{j}+1
\]

Keep in mind that the equality in (\ref{e2}) may hold only if that in
(\ref{e4}) does or equivalently, if (\ref{e4}) does not hold then neither does
(\ref{e2}).

Next, we use (\ref{e2a}) to transform (\ref{e4}) into the equivalent form that
involves only the odd number $k_{1}$%
\begin{align}
(2^{j}k_{1}+1)^{2}+k_{1}  &  =2^{m-j}\nonumber\\
2^{2j}k_{1}^{2}+2^{j+1}k_{1}+k_{1}+1  &  =2^{m-j} \label{e5}%
\end{align}

Comparing (\ref{e2}) and (\ref{e5}) shows that our argument implies a
reduction of both sides of (\ref{e2}) by a factor of $2^{j}$. Furthermore, the
even number $k_{1}+1$ may be written like (\ref{e2a}) as%
\begin{equation}
k_{1}+1=2^{j_{2}}k_{2}\quad\text{or\quad}k_{1}=2^{j_{2}}k_{2}-1 \label{e5a}%
\end{equation}
where $j_{2},k_{2}\in\mathbb{N}$ with $k_{2}$ odd. This similarity suggests a
reduction process in which (\ref{e5}) can be further reduced by a factor of at
least 2 at each step. For instance, using (\ref{e5a}) in (\ref{e5}) gives%
\[
2^{2j}k_{1}^{2}+2^{j+1}k_{1}+2^{j_{2}}k_{2}=2^{m-j}%
\]
which may hold only if $j_{2}=j+1$. In this case, division by $2^{j+1}$
yields
\begin{equation}
2^{j-1}k_{1}^{2}+k_{1}+k_{2}=2^{m-2j-1} \label{e5b}%
\end{equation}

It is worth mentioning here that this equality is valid only if $j\geq2$
because we cannot have an odd number on the left side and an even one the
right. Thus (\ref{e1}) does not hold for any $k,m$ if $j=1$, a fact that can
be proved independently by factoring the left side of (\ref{e2}) with $j=1$.
Now, we change all $k_{1}$ to $k_{2}$ in (\ref{e5b})%
\begin{align}
2^{j-1}(2^{j+1}k_{2}-1)^{2}+2^{j+1}k_{2}-1+k_{2}  &  =2^{m-2j-1}\nonumber\\
2^{3j+1}k_{2}^{2}-2^{2j+1}k_{2}+2^{j-1}+2^{j+1}k_{2}-1+k_{2}  &
=2^{m-2j-1}\nonumber\\
2^{3j+1}k_{2}^{2}+(-2^{2j+1}+2^{j+1})k_{2}+2^{j-1}+k_{2}-1  &  =2^{m-2j-1}
\label{e6}%
\end{align}

Note that (\ref{e2}) holds only if (\ref{e6}) does. For (\ref{e6}) to hold,
$m$ must exceed $2j+1$ and also
\[
k=2^{j}k_{1}+1=2^{j}(2^{j+1}k_{2}-1)+1\geq2^{j}(2^{j+1}-1)+1
\]

In particular, with $j=2$ (\ref{e1}) has no solutions with $k<2^{2}%
(2^{3}-1)+1=29$ thus ruling out small values for $k$. For larger $j$ the above
lower bound of $k$ increases even more.

We continue by induction: based on (\ref{e6}) we assume that $k,m$ were large
enough that the equation was reduced $i$ times in the above manner and led to
the equality%
\begin{equation}
2^{a_{i1}\,j+a_{i2}}k_{i}^{2}+k_{i}\sum_{l}2^{b_{l1}\,j+b_{l2}}\beta_{l}%
+\sum_{p}2^{c_{p1}\,j+c_{p2}}\gamma_{p}+k_{i}\pm1=2^{m-d_{i1}\,j+d_{i2}}
\label{ie1}%
\end{equation}
where%
\[
a_{i1},b_{l1},c_{p1},d_{i1},l,p\in\mathbb{N},\quad a_{i2},b_{l2},c_{p2}%
,d_{i2}\in\mathbb{Z},\quad\beta_{l},\gamma_{p}\in\{-1,1\}.
\]
and for all $i,j,l,p$
\[
a_{i1}\,j+a_{i2},b_{l1}\,j+b_{l2},c_{p1}\,j+c_{p2}\in\mathbb{N}%
\]

Next, define%
\[
k_{i}\pm1=2^{j_{i+1}}k_{i+1}%
\]
and substitute it in (\ref{ie1}) and arguing as before, we find that
(\ref{ie1}) holds only if%
\[
j_{i+1}=\min_{l,\,p}\{a_{i1}\,j+a_{i2},b_{l1}\,j+b_{l2},c_{p1}\,j+c_{p2}\}
\]
that is, $j_{i+1}$ must be the smallest power of 2 on the left side of
(\ref{ie1}). For instance, in (\ref{e6}) $j_{3}=j-1$ since the least power of
2 on the left side is $j-1$.

The division by $2^{j_{i+1}}$ yields a unit term $\pm1$ depending on the
coefficient of the term $\pm2^{j_{i+1}}$. This unit term then combines with
$k_{i}$ to generate $k_{i+1}$ via%
\[
k_{i}\pm1=2^{j_{i+1}}k_{i+1}%
\]

There is the question as to whether the $\pm2^{j_{i+1}}$ has a non-unit
coefficient in which case the last equality is questionable. But this is not
problematic because if we have $n$ occurrences of $2^{j_{i+1}}$ with $n>1$
then with $j_{i+1}$ being the least power, $n$ must be odd. Thus,%
\[
n2^{j_{i+1}}=(n-1)2^{j_{i+1}}+2^{j_{i+1}}=2^{j_{i+1}+\alpha}\delta+2^{j_{i+1}}%
\]
where $\alpha,\delta\in\mathbb{N}$. A similar argument applies if the
coefficient of $2^{j_{i+1}}$ is negative. It is clear that the term
$2^{j_{i+1}}$ with the least power appears uniquely in (\ref{ie1}).

Next, set $k_{i}=2^{j_{i+1}}k_{i+1}\pm1$ as mentioned above and divide
(\ref{ie1}) by $2^{j_{i+1}}$ to obtain the following:%
\[
2^{a_{i1}\,j+a_{i2}-j_{i+1}}k_{i}^{2}+k_{i}\sum_{l}2^{b_{l1}\,j+b_{l2}%
-j_{i+1}}\beta_{l}+\sum_{p}2^{c_{p1}\,j+c_{p2}-j_{i+1}}\gamma_{p}%
+k_{i+1}=2^{m-d_{i1}\,j+d_{i2}-j_{i+1}}%
\]

The right hand side of the above equation may be transformed to an equation
involving only $k_{i+1}$%
\begin{gather*}
2^{a_{i1}\,j+a_{i2}-j_{i+1}}(2^{j_{i+1}}k_{i+1}\pm1)^{2}+(2^{j_{i+1}}%
k_{i+1}\pm1)\sum_{l}2^{b_{l1}\,j+b_{l2}-j_{i+1}}\beta_{l}+\\
\hspace{3.5in}+\sum_{p}2^{c_{p1}\,j+c_{p2}-j_{i+1}}\gamma_{p}+k_{i+1}=\\
2^{a_{i1}\,j+a_{i2}-j_{i+1}}(2^{2j_{i+1}}k_{i+1}^{2}\pm2^{j_{i+1}+1}%
k_{i+1}+1)+2^{j_{i+1}}k_{i+1}\sum_{l}2^{b_{l1}\,j+b_{l2}-j_{i+1}}\beta_{l}+\\
\hspace{2in}\pm\sum_{l}2^{b_{l1}\,j+b_{l2}-j_{i+1}}\beta_{l}+\sum_{p}%
2^{c_{p1}\,j+c_{p2}-j_{i+1}}\gamma_{p}+k_{i+1}=\\
2^{a_{i1}\,j+a_{i2}+j_{i+1}}k_{i+1}^{2}+\left(  \pm2^{a_{i1}\,j+a_{i2}+1}%
+\sum_{l}2^{b_{l1}\,j+b_{l2}}\beta_{l}\right)  k_{i+1}+2^{a_{i1}%
\,j+a_{i2}-j_{i+1}}+\\
\hspace{2in}\pm\sum_{l}2^{b_{l1}\,j+b_{l2}-j_{i+1}}\beta_{l}+\sum_{p}%
2^{c_{p1}\,j+c_{p2}-j_{i+1}}\gamma_{p}+k_{i+1}=%
\end{gather*}

The last three terms prior to $k_{i+1}$ may now be grouped together and
written as a single summation. Doing this and some minor rewriting,%
\begin{gather}
2^{(a_{i1}+1)\,j+(a_{i2}+j_{i+1}-j)}k_{i+1}^{2}+\left(  \pm2^{a_{i1}%
\,j+a_{i2}+1}+\sum_{l}2^{b_{l1}\,j+b_{l2}}\beta_{l}\right)  k_{i+1}+\sum
_{q}2^{h_{q1}\,j+h_{q2}}\eta_{q}+k_{i+1}\label{ie2}\\
\hspace{4.5in}=2^{m-d_{i1}\,j+d_{i2}-j_{i+1}}\nonumber
\end{gather}

Comparing this equation with (\ref{ie1}) indicates that the unit term $\pm1$
is missing in explicit form but it is there implicitly (and uniquely) since
one of the powers of 2 in (\ref{ie2}) reduces to 0 after subtracting $j_{i+1}$
from its exponent. We conclude that the process of reducing the power on the
two sides continues indefinitely and (\ref{e1}) does not have a solution
$j,k,m$ as claimed.
\end{proof}

\medskip

The next theorem substantially improves a result in \cite{HS2} by giving a
complete qualitative characterization of all orbits of the divide-or-choose-2 rule.

\begin{theorem}
\label{T}For every $m,l\in\mathbb{N}$ an orbit of the recursion (\ref{q1})
with initial value $x_{0}=2^{l}(2^{m}+1)$ ends in a cycle of length $m$. For
every other initial value the orbit goes to infinity.
\end{theorem}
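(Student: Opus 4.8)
The plan is to dispose of the first assertion at once and then concentrate on divergence. If $x_{0}=2^{l}(2^{m}+1)$ then, since $2^{m}+1$ is odd, (\ref{evo}) shows the orbit reaches $2^{m}+1$ in exactly $l$ steps, and by Lemma \ref{L1} the number $2^{m}+1$ lies on an $m$-cycle; hence the orbit ends in a cycle of length $m$. What needs real work is the converse: an orbit whose initial value is not of this shape must go to infinity. I would prove this by following the successive odd ``valley'' values of the orbit and showing they increase without bound.

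The first step is to reduce to odd initial values. Writing $x_{0}=2^{a}w$ with $w$ odd, (\ref{evo}) makes the orbit descend to $w$ through terms that are all $\ge w$, so it suffices to treat odd numbers. Two families drop out immediately: $w=1$ (equivalently, $x_{0}$ a power of two, whose orbit runs through $1\to0$ and stalls at the fixed point $0$), and $w=2^{m}+1$ (the periodic case already handled). This leaves the hypothesis that $w\ge 3$ is odd and $w\neq 2^{m}+1$ for every $m$.

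The second step is the valley construction. For an odd $v\ge 3$ write $v-1=2^{j}k$ with $k$ odd and $j\ge 1$; when $k\ge 3$, (\ref{odom}) tells us the orbit reaches $kv=k(2^{j}k+1)$ after $j$ more steps, and the intervening terms $2^{j-1}kv,2^{j-2}kv,\dots,kv$ are all $\ge kv\ge 3v>v$. Applied from $w=w^{(0)}$, this produces odd numbers $w^{(0)}<w^{(1)}<w^{(2)}<\cdots$ with $w^{(i+1)}=k_{i}w^{(i)}=k_{i}(2^{j_{i}}k_{i}+1)$, and the construction continues as long as $k_{i}\ge 3$ at every stage, i.e.\ as long as no $w^{(i)}$ is of the form $2^{m}+1$. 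This is the precise point at which Lemma \ref{L2} is decisive: for $i\ge 1$ we have $w^{(i)}=k_{i-1}(2^{j_{i-1}}k_{i-1}+1)$ with $k_{i-1}\ge 3$ odd and $j_{i-1}\ge 1$, so Lemma \ref{L2} rules out $w^{(i)}=2^{m}+1$; and since $w^{(i)}=k_{i-1}w^{(i-1)}\ge 9$, the representation $w^{(i)}-1=2^{j_{i}}k_{i}$ again has $k_{i}\ge 3$. By induction the chain $w^{(i)}$ never terminates and obeys $w^{(i+1)}\ge 3w^{(i)}$, so $w^{(i)}\to\infty$; since every orbit value after the appearance of $w^{(i)}$ is $\ge w^{(i+1)}$, we conclude $x_{n}\to\infty$ (alternatively, the orbit is unbounded, hence cannot end in a cycle, hence goes to infinity by the result of \cite{HS2} recalled above).

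The genuine obstacle here is Lemma \ref{L2} itself --- the statement that a grown orbit can never fall back onto some $2^{m}+1$ and re-enter a cycle --- and that has already been settled; what remains for Theorem \ref{T} is routine verification, namely that each block of $j_{i}$ steps stays $\ge k_{i}w^{(i)}$ (from (\ref{odo})--(\ref{odom})) and that the crude bound $w^{(i)}\ge 9$ legitimizes the appeal to Lemma \ref{L2}. It should also be recorded, to square the stated dichotomy with its boundary cases, that $x_{0}=0$ and the powers of two are captured by the fixed point $0$ rather than diverging.
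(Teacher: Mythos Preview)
Your argument is correct and follows essentially the same route as the paper: reduce to an odd start, track the successive odd ``valley'' values $w^{(i)}$, use (\ref{odom}) to get $w^{(i+1)}=k_i w^{(i)}$, and invoke Lemma~\ref{L2} at each stage to guarantee $k_i\ge 3$, forcing $w^{(i)}\ge 3^{i}w^{(0)}\to\infty$. Your explicit remark that $x_0=0$ and the powers of $2$ are absorbed by the fixed point $0$ (rather than diverging) is a genuine refinement, since the theorem's dichotomy as stated silently omits these initial values.
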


\begin{proof}
Lemma \ref{L1} and the related discussion above prove the first statement. To
prove the second statement it is enough to consider $x_{0}=2^{j_{0}}k_{0}+1$
where $k_{0}$ is odd and $j_{0}\in\mathbb{N}$ because any even initial value
reaches an odd number by successive divisions. If $k_{0}=1$ then the orbit
ends in a $j_{0}$-cycle. If $k_{0}\geq3$ then%
\[
x_{j_{0}}=k_{0}x_{0}\geq3x_{0}%
\]
so on the down-swing the orbit reaches a larger odd number $x_{j_{0}}$. Let
$x_{j_{0}}=2^{j_{1}}k_{1}+1$ where $k_{1}$ is odd and note that%
\begin{equation}
k_{0}(2^{j_{0}}k_{0}+1)=k_{0}x_{0}=x_{j_{0}}=2^{j_{1}}k_{1}+1 \label{Te1}%
\end{equation}

Lemma \ref{L2} implies that this equation has no solutions $j_{1},j_{0},k_{0}$
with $k_{1}=1$ so it follows that $k_{1}\geq3$. In fact, a given odd number
$x_{0}$ determines $j_{0},k_{0}$ uniquely and from (\ref{Te1}) we find $j_{1}$
as the largest positive integer such that $[k_{0}(2^{j_{0}}k_{0}%
+1)-1]2^{-j_{1}}$ is an integer that we label $k_{1}$. The numbers $j_{1}$ and
$k_{1}$ exist with $k_{1}$ odd because $j_{1}$ is maximal and $k_{0}(2^{j_{0}%
}k_{0}+1)$ is odd and not equal to 1.

Now, the arguments that led to (\ref{Te1}) can be repeated to yield:%
\[
x_{j_{0}+j_{1}}=k_{1}x_{j_{0}}=k_{1}k_{0}x_{0}\geq3^{2}x_{0}%
\]

Setting $x_{j_{0}+j_{1}}=2^{j_{2}}k_{2}+1$ and arguing as before, it follows
that $k_{2}\geq3$ and the process continues. A simple induction verifies that
after $n$ steps%
\[
x_{j_{0}+j_{1}+\cdots+j_{n}}=k_{n}\cdots k_{1}k_{0}x_{0}\geq3^{n+1}x_{0}%
\]

Note that the subsequence or sub-orbit consisting of the numbers
\[
x_{0}, x_{j_{0}}, x_{j_{0}+j_{1}}, \ldots, x_{j_{0}+j_{1}+\cdots+j_{n}},
\ldots
\]
are precisely the odd terms of the orbit. They are separated by decreasing
chains of even terms (except when $j_{p}=1$ for some $p$ so we have
consecutive odd terms $x_{j_{0}+\cdots+j_{p-1}}$ and $x_{j_{0}+\cdots
+j_{p-1}+j_{p}}$). It follows that the orbit goes to infinity if $k_{0}>1$ in
$x_{0}$.
\end{proof}

\medskip

An obvious consequence of Theorem \ref{T} is that all \textit{unbounded}
orbits of the divide-or-choose-2 rule must converge to infinity; they cannot
have bounded subsequences. Also most orbits go to infinity in the sense that
most odd numbers are of type $2^{j}k+1$ with $k\geq3.$ For example, if
$x_{0}=2^{m}-1$ with $m\geq3$ then we can write it as%
\[
x_{0}=2(2^{m-1}-1)+1
\]
with $j=1$ and $k=2^{m-1}-1\geq3$ and infer via Theorem \ref{T} that the
corresponding orbit goes to infinity. Note that numerical simulation on a
computer may generate a false orbit that appears to be bounded, if due to
finite memory the computer fails at some step of the iteration to distinguish
between the numbers $2^{j}k\pm1$ and $2^{j}k$ for large $j,k$.

Similarly,
\[
x_{0}=2^{m}+3=2(2^{m-1}+1)+1
\]
so if $m\geq2$ then the corresponding orbit goes to infinity. These statements
were left as open questions in \cite{HS2}.

\medskip



\begin{thebibliography}{9}                                                                                                %


\bibitem {LG}Lagarias, J.C., (Editor) \textit{The Ultimate Challenge: The 3x+1
Problem}, American Mathematical Society, Providence, 2010


\bibitem {HS2}Sedaghat, H. (2020) Bounded orbits of quadratic Collatz-type
recursions, arxiv:2004.07357
\end{thebibliography}
\end{document}